\documentclass[11pt,a4paper]{article}
\usepackage[utf8]{inputenc}
\usepackage[T1]{fontenc}
\usepackage{lmodern,amsmath,amssymb,amsthm,booktabs,microtype,needspace}
\usepackage{array,titlesec,graphicx}
\usepackage[margin=2.6cm]{geometry}
\usepackage[hidelinks]{hyperref}
\titleformat{\section}{\normalfont\Large}{\thesection}{0.7em}{}
\titleformat{\subsection}{\normalfont\large}{\thesubsection}{0.7em}{}
\newtheoremstyle{plainhead}{\topsep}{\topsep}{\itshape}{}{\normalfont}{.}{.5em}{}
\theoremstyle{plainhead}
\newtheorem{theorem}{Theorem}
\newtheorem{lemma}[theorem]{Lemma}
\newtheorem{proposition}[theorem]{Proposition}
\newtheorem{conjecture}[theorem]{Conjecture}
\newcommand{\E}{\mathbb E}
\graphicspath{{figures/}}
\hypersetup{pdftitle={Perturbing the Conway sequence},pdfauthor={Benoit Cloitre},
 pdfsubject={Version 8: final editorial revision and explicit algebraic weights}}
\title{Perturbing the Conway sequence}
\author{Benoit Cloitre}
\date{October 21, 2026}
\begin{document}
\raggedbottom
\maketitle

\begin{abstract}
We study the alternating perturbation of Conway's nested recurrence,
$C(n)=C(C(n-1))+C(n-C(n-1))+(-1)^n$ for $n\ge3$, with $C(1)=C(2)=1$.
We prove that the sequence is well defined and satisfies
$C(n)=\psi^{-3}n+O(n/\sqrt{\log n})$, where $\psi>1$ is the real root of
$\psi^3=\psi^2+1$.
\end{abstract}

\section{Introduction}\label{sec:main}
We consider the sequence (OEIS A399773~\cite{CloitreOEIS}) defined by
\begin{equation}\label{eq:C}
 C(1)=C(2)=1,\qquad
 C(n)=C(C(n-1))+C(n-C(n-1))+(-1)^n\quad(n\ge3).
\end{equation}
Without the alternating term, Conway's recurrence has limiting ratio
$1/2$; see Mallows~\cite{Mallows} and Kubo--Vakil~\cite{KuboVakil}.
The alternating perturbation changes this limiting ratio. Let $\psi>1$
be the real root of $\psi^3=\psi^2+1$, so that the candidate slope is
$\psi^{-3}=0.317672196\ldots$.
Throughout, $\log$ denotes the natural logarithm.

\begin{theorem}\label{thm:main}
The recurrence \eqref{eq:C} is well defined for every positive integer, and
\begin{equation}\label{eq:rate}
 C(n)=\frac{n}{\psi^3}+O\!\left(\frac{n}{\sqrt{\log n}}\right).
\end{equation}
In particular, $C(n)/n\longrightarrow\psi^{-3}$.
\end{theorem}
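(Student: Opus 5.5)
Our plan is to uncover an exact self-similar block structure for $C$, in the spirit of the Hofstadter--Conway analyses of Mallows and Kubo--Vakil, and then extract the slope and the error term from the resulting renormalization.

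\emph{Step 1: computation and a structural conjecture.} We first compute $C(n)$ for $n$ up to a few million and study the first differences $\Delta(n)=C(n)-C(n-1)$. We expect these to lie in a small set such as $\{-1,0,1,2\}$, and we expect the sequence to split into consecutive blocks whose lengths grow like $\psi^k$, hence obey a linear recurrence with characteristic polynomial $x^3-x^2-1$ (for instance $L_{k+1}=L_k+L_{k-2}$, Narayana-type). On each block we expect $C$ to be an affine image of $C$ on earlier blocks. The conjectured structure is an explicit substitution rule: the difference word on block $k+1$ is the concatenation of the words on blocks $k$ and $k-2$, up to boundary corrections.

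\emph{Step 2: induction, which also gives well-definedness.} We prove by strong induction on $n$ a package of invariants. These are: $1\le C(n-1)\le n-1$; the differences $\Delta$ lie in the finite alphabet found in Step 1; and the block decomposition holds up to $n$. The first invariant guarantees that both arguments $C(n-1)$ and $n-C(n-1)$ lie in $[1,n-1]$, which is exactly well-definedness. For the inductive step, we place $C(n-1)$ and $n-C(n-1)$ inside earlier blocks. The parity term $(-1)^n$ must then be tracked through the affine maps, and block lengths must have controlled parities so that the alternation is compatible with the self-similarity. This is the \textbf{main obstacle}: the case analysis near block boundaries, where the two nested arguments straddle different blocks and the $\pm1$ corrections must be shown to cancel. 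We would first try to choose block endpoints where $C(n)=n-C(n)$-type coincidences occur, making the nested arguments land at boundaries, and use computer-verified base cases for the finitely many boundary patterns.

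\emph{Step 3: asymptotics.} From the block structure, let $S_k$ be the value of $C$ at the end of block $k$. Then $S_k$ and $L_k$ satisfy the same recurrence, so $S_k/(L_0+\dots+L_k)\to\psi^{-3}$. The ratio is obtained from the Perron eigenvector of the companion matrix of $x^3-x^2-1$. The weak rate $O(n/\sqrt{\log n})$ suggests that the within-block deviation is not uniformly $o(1)\cdot n$ with a geometric rate. Instead, inside a block of level $k$, $C(n)-n\psi^{-3}$ should behave like a sum over the digits of the Zeckendorf-type (Narayana numeration) expansion of $n$, with $k\asymp\log n$ digits. We would bound this deviation by a sum of bounded contributions weighted by $\psi^{j}$ times a decaying factor. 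Alternatively, we would follow Kubo--Vakil and show that the block maximum of $|C(n)-n\psi^{-3}|/n$ decreases like $k^{-1/2}$, via a random-walk/central-limit-type averaging of the $\pm$ contributions across levels. Converting $k\asymp\log n$ then gives \eqref{eq:rate}, and the limit $C(n)/n\to\psi^{-3}$ follows immediately.
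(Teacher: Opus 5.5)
There is a genuine gap: this is a research plan rather than a proof, and both load-bearing steps are missing.

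\textbf{The structural steps.} Steps 1--2 rest on a substitution rule that you neither state nor prove. Well-definedness hangs on that rule too, although it follows from the two-line bound $C(n)\le(u-1)+(n-u-1)+1$ with $u=C(n-1)$. You also leave the boundary case analysis, which is the whole difficulty, unresolved.

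Moreover, the rule as you describe it cannot be right. Suppose the difference word of block $k+1$ were that of block $k$ followed by that of block $k-2$, with boundedly many corrections. Put $E(n)=C(n)-\psi^{-3}n$. The drift across block $k$ would satisfy $\Delta_{k+1}=\Delta_k+\Delta_{k-2}+O(1)$, hence $\Delta_k=O(1)$ once $C(n)/n\to\psi^{-3}$. The oscillation within blocks would satisfy $M_{k+1}\le\max(M_k,M_{k-2})+O(1)$. Together these give $C(n)=\psi^{-3}n+O(\log n)$, which contradicts your own Step 3 heuristic that the deviation does not decay at a geometric rate. An analogous rule is also false for Conway's unperturbed sequence, whose humps $a(n)/n-1/2$ decay only like $1/\sqrt{\log n}$.

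Your guessed alphabet also overlooks that $C$ takes only odd values and is not monotone. This is exactly the paper's starting point. Because the perturbations at $n$ and $n+2$ have the same sign, $C(n+2)-C(n)\in\{0,2\}$. So the parity subsequences $A,B$ are slow and nondecreasing, and the maps $A$, $B$, $x-A(x)$, $x-B(x)$ are $1$-Lipschitz, so errors are not amplified under composition.

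\textbf{The asymptotic step.} Step 3 supplies no mechanism for the rate. Two solutions of the Narayana recurrence need not have ratio $\psi^{-3}$: their ratio depends on initial data. The paper obtains this ratio from a three-level shift, $C(T_j)=T_{j-3}+O(1)$. The phrases ``bounded contributions weighted by $\psi^j$ times a decaying factor'' and ``CLT-type averaging'' name the conclusion without saying what is averaged or why errors cancel.

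The paper avoids any global block structure.
\begin{itemize}
\item It proves exact values only in short windows around the geometric reference indices $s_j$, anchoring each window with the first-passage lemma. This keeps every argument of the recursion tree at scale $s_{J+1-d(w)}+O(|w|)$.
\item It treats the normalized arguments as a bounded martingale with branch weights $\psi^{-3},\psi^{-1}$, so total squared increments are $O(1)$.
\item It uses the fact that paths $10$ and $01$ have equal weight but end at $A$ and $B$ respectively. This bounds the error $D_{w10}$ by argument increments at every level.
\end{itemize}
Summing over $H\asymp\log N$ levels then yields $O(H^{-1/2})$. Your plan has no counterpart to either the exact windows or the martingale comparison.
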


The motivation was Mantovanelli's addition of $(-1)^n$ to Hofstadter's
$Q$-recurrence~\cite{Mantovanelli}. In~\cite{CloitreQ}, we proved that the
resulting sequence $\widetilde Q$ is well defined and satisfies
$\widetilde Q(n)=n/2+O(n/\sqrt{\log n})$, with this error order optimal.
That proof uses binary words, Dyck paths, and plane forests. Here the
argument instead combines exact values on a geometric subsequence with
a quadratic estimate on the recursion tree.

\subsection{Motivation and proof outline}\label{sec:discovery}
The first terms are
\[
 C=1,1,1,3,1,3,3,3,3,5,3,5,3,5,5,5,\ldots.
\]
Although $C$ is not monotone, the graph of $C(n)/n$ reveals a regular
pattern of arches on a logarithmic horizontal scale
(Figure~\ref{fig:contacts}). Examining selected local maxima of
$C(n)-\psi^{-3}n$ led to two empirical families of indices,
\[
 (110,230,488,1042,2232,4788,\ldots),\qquad
 (704,1516,3260,7006,15052,\ldots).
\]
Starting at $488$, they interleave as
$488,704,1042,1516,2232,3260,\ldots$, with consecutive ratios appearing
to approach $\psi$. Since $\psi^3=\psi^2+1$, it is natural to test the
residual $T_j-T_{j-1}-T_{j-3}$. In the computed list, it alternated
between $-14$ and $12$, beginning at $j=3$. We therefore define the
reference indices by
\begin{equation}\label{eq:T}
 \begin{split}
 T_0&=488,\quad T_1=704,\quad T_2=1042,\\
 T_j&=T_{j-1}+T_{j-3}-1+13(-1)^j\qquad(j\ge3).
 \end{split}
\end{equation}
We shall prove exact formulas for $C(T_j)$ and $C(T_j-1)$ in
Section~\ref{sec:profiles}. In particular, $C(T_j)=\psi^{-3}T_j+O(1)$, so the
marked values of $C(T_j)/T_j$ approach $\psi^{-3}$ at order $T_j^{-1}$.

\begin{figure}[t]
\centering
\includegraphics[width=0.96\linewidth]{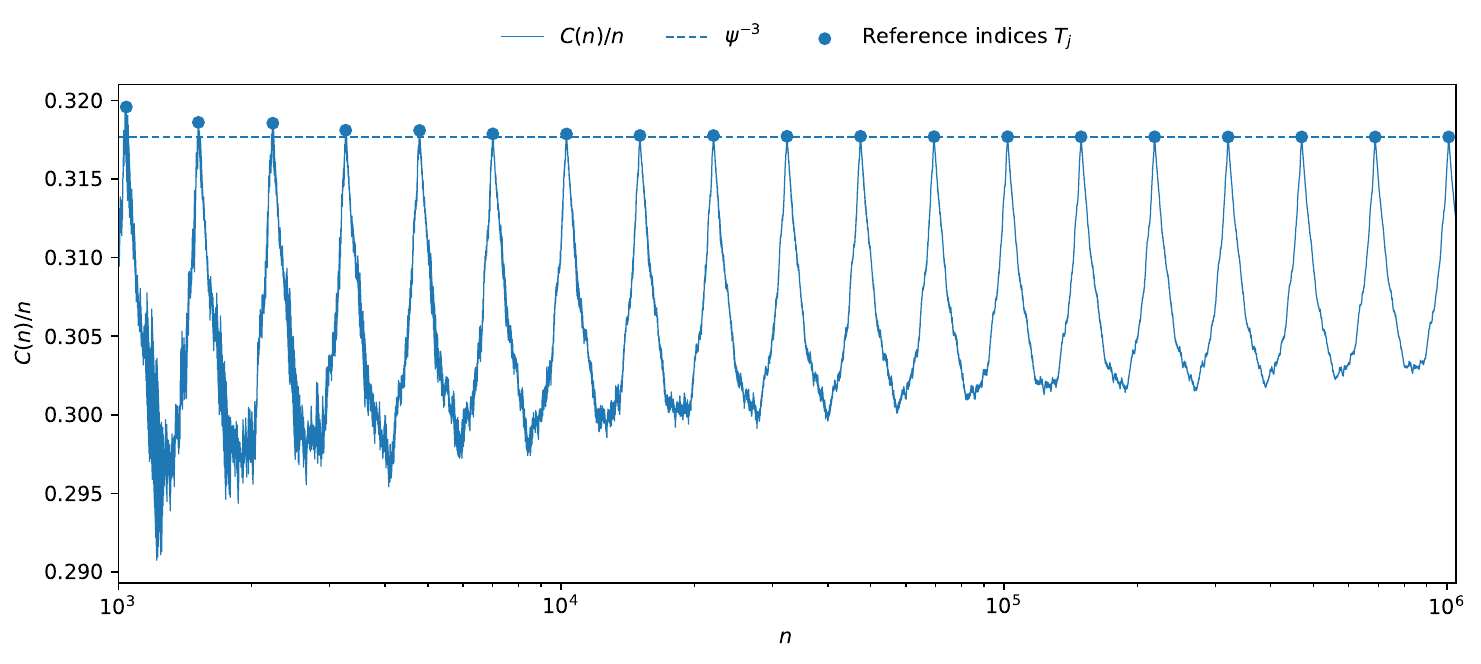}
\caption{$C(n)/n$ for $10^3\le n\le2^{20}$, with a logarithmic horizontal
axis. The markers show the computed values at the reference indices
$T_j$. The exact formulas proved in Section~\ref{sec:profiles} imply
$C(T_j)/T_j=\psi^{-3}+O(T_j^{-1})$.}
\label{fig:contacts}
\end{figure}

A comparison with the shifted sequence
\[
 R(1)=R(2)=R(3)=1,\qquad
 R(n)=R(R(n-2))+R(n-1-R(n-2))\quad(n\ge4)
\]
also helped identify the local structure. This sequence satisfies
$R(n)=\psi^{-3}n+O(1)$~\cite{CloitreM}. Jeffrey Shallit supplied a conjectural
automaton for $\max\{C(n)-R(n),0\}$ in Narayana
numeration~\cite{ShallitDIFF}. Its repeating patterns included the
reference families already observed and suggested examining short
windows around them. The windows led to the two tables used below;
their values are established directly from \eqref{eq:C}.

The proof has two steps. First, the odd and even positions of $C$ give
two nondecreasing sequences $A,B$ whose increments are $0$ or $1$.
Around each $s_j=T_j/2$, a short table of their values, translated
vertically by $s_{j-3}$, repeats according to the parity of $j$.
The recurrence reads neighboring entries as well as the centers, so
these tables contain just the extra information needed for induction. The main difficulty
is locating the left endpoint of the next window. At the first index
where $A$ reaches a value $v$, the preceding value is necessarily
$v-1$; this observation determines the endpoint from earlier tables.

The second step controls all indices between the reference points.
Their spacing is comparable to their size, so monotonicity alone
would leave a nonvanishing relative error. Expanding the recurrence
on a binary tree gives weighted averages, with weights $\psi^{-3},\psi^{-1}$
corresponding to descents of three and one reference levels.
The key comparison is between the paths $10$ and $01$: their weights
agree, but they end at different functions. This relates an error in
a sequence value to changes of the arguments in the tree. Summing
squared changes over $H$ levels gives an $O(H^{-1/2})$ bound. The exact
tables permit $H$ to be proportional to $\log n$, yielding
\eqref{eq:rate}.

Sections~\ref{sec:parities}--\ref{sec:profiles} establish the parity
properties, the first-passage lemma, and the local tables.
Section~\ref{sec:rate} gives the tree estimate and completes the proof.
The finite initial calculation is recorded in Appendix~\ref{app:seed}.

\section{Splitting the sequence}\label{sec:parities}
The perturbations at $n$ and $n+2$ have the same sign, so they cancel
when the corresponding recurrences are subtracted. This gives
monotonicity separately on the odd and even positions.

\begin{lemma}\label{lem:basic}
The recurrence \eqref{eq:C} is well defined. Its values are positive and odd,
and
\[
 1\le C(n)\le n-1\quad(n\ge2),\qquad
 C(n+2)-C(n)\in\{0,2\}\quad(n\ge1).
\]
Both parity subsequences are unbounded, and each takes every positive odd
integer value.
\end{lemma}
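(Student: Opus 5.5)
We argue by strong induction on $n\ge 3$, proving simultaneously: (i) $C(m)$ is defined, odd, and $1\le C(m)\le m-1$ for all $2\le m<n$; and (ii) $C(m+2)-C(m)\in\{0,2\}$ for all $m$ with $m+2<n$. The base cases are read off from the listed initial terms $1,1,1,3,1,3,\ldots$, checked directly for $n\le 6$ or so, as in Appendix~\ref{app:seed}.

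\emph{Well-definedness, bounds and parity.} For $n\ge3$ put $c=C(n-1)$. By (i), $1\le c\le n-2$. Hence both arguments $c$ and $n-c$ lie in $[1,n-1]$, and \eqref{eq:C} is defined. Both summands are odd, so their sum is even, and adding $(-1)^n$ makes $C(n)$ odd. For the bounds, $C(c)\le \max(c-1,1)$ and $C(n-c)\le n-c-1$ (using $C(1)=1$ when an argument equals $1$), so $C(n)\le n-1$ after a short case check when $c=1$. For the lower bound, $C(n)\ge 1+1-1=1$.

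\emph{The step $C(n+2)-C(n)\in\{0,2\}$.} Write $c=C(n-1)$ and $c'=C(n+1)$. By (ii), $c'-c\in\{0,2\}$. Subtracting the two recurrences, the $(-1)^n$ terms cancel, leaving
\[
C(n+2)-C(n)=\bigl[C(c')-C(c)\bigr]+\bigl[C(n+2-c')-C(n-c)\bigr].
\]
If $c'=c$, both brackets vanish except the second, which equals $C(m+2)-C(m)$ with $m=n-c$. This lies in $\{0,2\}$ by induction. If $c'=c+2$, the second bracket is $0$ and the first is $C(c+2)-C(c)\in\{0,2\}$. In both cases all indices are smaller than $n+2$, so the induction closes.

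\emph{Unboundedness and surjectivity.} This is the main obstacle. Monotonicity with steps of size $0$ or $2$ on each parity class already shows that once a class is unbounded, it hits every odd value. It remains to show each class is unbounded. Suppose the odd-position subsequence stabilizes at a value $v$, so that $C(2k+1)=v$ for all large $k$. Then for large odd $n$ we have $C(n-1)=C(\text{even})$, which is eventually monotone. If the even class were also bounded, then $c=C(n-1)$ would be eventually constant, and $C(n)=C(c)+C(n-c)-1$ would force $C(n)$ to grow along the class containing $n-c$, a contradiction. So at least one class is unbounded, and one must transfer this to the other. Take odd $n$ with $c=C(n-1)$ large. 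Then $C(n)\ge C(n-c)$, and $n-c$ ranges over arbitrarily large indices of a fixed parity class, since $c$ is odd and $n-c$ is even. This gives $C(\text{odd})\ge C(\text{even index})$ up to bounded terms, which forces unboundedness. A symmetric argument handles even $n$. I expect the care to lie in making this transfer between parity classes precise, using the parities of $c$ and $n-c$.
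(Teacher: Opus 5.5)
Your treatment of well-definedness, the bounds, oddness, and the increment $C(n+2)-C(n)\in\{0,2\}$ is correct. It is essentially the paper's argument: the same subtraction, and the same two cases $c'=c$ and $c'=c+2$. The gap is in the unboundedness part, where you yourself expected trouble.

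\emph{Both classes bounded.} This step gives no contradiction as you wrote it. Suppose the odd and even classes are eventually equal to $a$ and $b$. For large odd $n$ the recurrence reads $C(n)=C(b)+C(n-b)-1$, with $n-b$ even and large, so $C(n-b)=b$. Nothing grows. You only obtain $a=C(b)+b-1$, which is consistent on its own.

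The missing idea is to use the recurrence at large even $n$ as well. There the splitting value is $a$ and $n-a$ is a large odd index, giving $b=C(a)+a+1$. The two relations together give $a\ge b$ (since $C(b)\ge1$) and $b>a$. This is the contradiction the paper uses.

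\emph{Transfer between classes.} Your transfer of unboundedness from one class to the other relies on $n-C(n-1)$ becoming arbitrarily large, and this is never justified. From what is established, $m-C(m)$ is only known to be nondecreasing along each parity class. Ruling out its boundedness, i.e.\ ruling out $C(m)=m-K$ along a class, would need a genuine growth-rate argument.

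The transfer is unnecessary anyway. If one class is eventually constant, say the odd values equal $a$, then for large even $n$ the splitting value is the fixed number $a$. Since $n-a$ is then a large odd index, $C(n)=C(a)+a+1$ is eventually constant as well. The symmetric argument works if the even class is the bounded one. So one bounded class forces both to be eventually constant, and the two-constant contradiction above finishes the proof. Your final remark, that an unbounded class with steps $0$ or $2$ starting at $1$ hits every odd value, is then fine.
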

\begin{proof}
The values $C(1)=C(2)=C(3)=1$ start the induction for well-definedness.
Let $n\ge4$, assume the earlier values are positive and odd with the
stated bound, and put $u=C(n-1)$. Then $1\le u\le n-2$, so both
$u$ and $n-u$ are earlier positive indices. The recurrence gives a
positive odd value. If $u\ge2$, both arguments are at least $2$, and
\[
 C(n)\le(u-1)+(n-u-1)+1=n-1.
\]
If $u=1$, then $C(n-1)=1$ and
$C(n)=2+(-1)^n\le3\le n-1$.

For the parity increments, use a separate induction on $n$. The cases
$n=1,2$ give $C(3)-C(1)=0$ and $C(4)-C(2)=2$. For $n\ge3$, put
$u=C(n-1)$ and $v=C(n+1)$. The preceding step gives $v-u\in\{0,2\}$,
while subtraction yields
\[
 C(n+2)-C(n)=C(v)+C(n+2-v)-C(u)-C(n-u).
\]
If $v=u$, this is $C(n-u+2)-C(n-u)$; if $v=u+2$, it is
$C(u+2)-C(u)$. In either case the induction hypothesis applies at a
positive starting index smaller than $n$.

If one parity subsequence were bounded, it would be eventually constant.
Its fixed splitting value in \eqref{eq:C} would then make the other
parity eventually constant as well. Let the eventual odd and even
values be $a,b$, respectively. Both are positive and odd, and the
recurrence at sufficiently large odd and even indices gives
\[
 a=C(b)+b-1\ge b,\qquad b=C(a)+a+1>a,
\]
a contradiction. An unbounded nondecreasing sequence of odd integers
starting at $1$ and taking steps $0$ or $2$ visits every positive odd
integer.
\end{proof}

Set $A(0)=B(0)=0$ and, for $k\ge1$, define
\begin{equation}\label{eq:parities}
 A(k)=\frac{C(2k-1)+1}{2},\qquad B(k)=\frac{C(2k)+1}{2}.
\end{equation}
Thus $A$ records the odd positions and $B$ the even positions, with
$C(2k-1)=2A(k)-1$ and $C(2k)=2B(k)-1$.
Both are \emph{slow}: their successive increments belong to $\{0,1\}$.
They are unbounded and satisfy $0\le A(k),B(k)\le k$.
Moreover, all four maps
\[
 A,\quad B,\quad x\mapsto x-A(x),\quad x\mapsto x-B(x)
\]
are nondecreasing and $1$-Lipschitz. Consequently, compositions of these
maps do not amplify absolute errors, a fact used in Section~\ref{sec:rate}.

Substitution into \eqref{eq:C} gives, for $k\ge2$,
\begin{align}
 A(k)&=A(B(k-1))+B(k-B(k-1))-1,\label{eq:A}\\
 B(k)&=A(A(k))+A(k+1-A(k)).\label{eq:B}
\end{align}
For example, at position $2k-1$ the splitting value is
$C(2k-2)=2B(k-1)-1$. The two arguments in \eqref{eq:C} are therefore
$2B(k-1)-1$ and $2(k-B(k-1))$, giving \eqref{eq:A}.

\section{Pinpointing first passages}\label{sec:firstpassage}
To propagate a table to a new window, we must first locate its left
endpoint $k_0$. Evaluating $A(k_0)$ by \eqref{eq:A} requires
$B(k_0-1)$, just outside that window. We overcome this difficulty by
asking where $A$ first reaches the desired value $v$.

Since $A$ is slow and nondecreasing, at the first index $K_v$ where
$A(K_v)=v$, its preceding value is necessarily $A(K_v-1)=v-1$.
Substituting $A(k-1)=v-1$ into \eqref{eq:B} at a trial index $k-1$
suggests the candidate expression $h_v(k):=A(v-1)+A(k-v+1)$ for
$B(k-1)$. Feeding this candidate into \eqref{eq:A} in turn yields a
test function $\Phi_v(k)$. In the application, this test can be
evaluated from two earlier tables.

\begin{lemma}[First passage]\label{lem:pin}
Let $v\ge3$, and write $K_v=\min\{k:A(k)=v\}$. For $k\ge v$, define
\begin{equation}\label{eq:pin-functions}
 \begin{split}
 h_v(k)&=A(v-1)+A(k-v+1),\\
 \Phi_v(k)&=A(h_v(k))+B(k-h_v(k))-1.
 \end{split}
\end{equation}
Then $\Phi_v$ is nondecreasing and $\Phi_v(K_v)=v$. If $k_0-1\ge v$ and
\[
 \Phi_v(k_0-1)=v-1,\qquad \Phi_v(k_0+1)=v+1,
\]
then
\[
 K_v=k_0,\qquad A(k_0-1)=v-1,\qquad B(k_0-1)=h_v(k_0).
\]
\end{lemma}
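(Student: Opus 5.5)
The plan is to get the identity $\Phi_v(K_v)=v$ by unwinding \eqref{eq:B} and \eqref{eq:A} at the first-passage index. Monotonicity of $\Phi_v$ will come from slowness alone. The localization $K_v=k_0$ then follows by squeezing between the two hypotheses.

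\emph{Well-definedness and monotonicity.} For $k\ge v$ the arguments make sense. We have $k-v+1\ge1$ and $v-1\ge2$, so $h_v(k)\ge A(v-1)\ge1$. Also $h_v(k)\le (v-1)+(k-v+1)=k$ by $A(x)\le x$, so $k-h_v(k)\ge0$, where $B(0)=0$ is defined.

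Since $A$ is slow and nondecreasing, $h_v(k+1)-h_v(k)\in\{0,1\}$. Compare $\Phi_v(k+1)$ with $\Phi_v(k)$ in two cases.
\begin{itemize}
\item If $h$ does not change, the $A$-term is unchanged. The $B$-argument $k-h$ increases by $1$, so the $B$-term does not decrease.
\item If $h$ increases by $1$, the $B$-argument $k-h$ is unchanged. The $A$-argument increases by $1$, so the $A$-term does not decrease.
\end{itemize}
In both cases $\Phi_v(k+1)\ge\Phi_v(k)$, because $A$ and $B$ are nondecreasing.

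\emph{The identity $\Phi_v(K_v)=v$.} By Lemma~\ref{lem:basic}, $A$ takes every positive integer value, so $K_v$ exists. From $A(k)\le k$ we get $K_v\ge v\ge3$. Minimality and slowness give $A(K_v-1)=v-1$.

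Apply \eqref{eq:B} at index $K_v-1\ge2$:
\[
B(K_v-1)=A(A(K_v-1))+A(K_v-A(K_v-1))=A(v-1)+A(K_v-v+1)=h_v(K_v).
\]
Substitute this into \eqref{eq:A} at index $K_v\ge2$:
\[
v=A(K_v)=A(B(K_v-1))+B(K_v-B(K_v-1))-1=\Phi_v(K_v).
\]

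\emph{Localization.} Suppose $k_0-1\ge v$, $\Phi_v(k_0-1)=v-1$ and $\Phi_v(k_0+1)=v+1$. Both $k_0\pm1$ and $K_v$ lie in the domain $k\ge v$, so monotonicity applies.
\begin{itemize}
\item If $K_v\le k_0-1$, then $\Phi_v(k_0-1)\ge\Phi_v(K_v)=v$, a contradiction. Hence $K_v\ge k_0$.
\item If $K_v\ge k_0+1$, then $\Phi_v(k_0+1)\le v$, a contradiction. Hence $K_v\le k_0$.
\end{itemize}
Thus $K_v=k_0$. The remaining two conclusions, $A(k_0-1)=v-1$ and $B(k_0-1)=h_v(k_0)$, are exactly the facts established at $K_v$ in the previous step.

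There is no real obstacle here. The only points needing care are the domain checks: $v\ge3$ is what ensures $K_v-1\ge2$, so that \eqref{eq:B} is available, and that $h_v\ge1$. The other is that monotonicity of $\Phi_v$ uses only slowness, not any finer information about $A$ and $B$.
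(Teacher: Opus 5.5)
Your proof is correct and follows the paper's argument essentially step for step. You get monotonicity of $\Phi_v$ from the fact that $h_v$ and $k-h_v$ each move by $0$ or $1$, obtain $\Phi_v(K_v)=v$ by applying \eqref{eq:B} at $K_v-1$ and then \eqref{eq:A} at $K_v$, and conclude with the squeeze between $k_0\pm1$; you also make explicit two points the paper leaves implicit, namely that $K_v$ exists (by Lemma~\ref{lem:basic}) and that all arguments lie in the domain.
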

\begin{proof}
The bounds $A(u)\le u$ give $0\le h_v(k)\le k$, so all terms are
defined. As $k$ increases by one, $h_v(k)$ increases by zero or one;
the same is true of $k-h_v(k)$. Hence $\Phi_v$ is nondecreasing.

At the first passage, $A(K_v-1)=v-1$. Since $K_v\ge v\ge3$,
equation~\eqref{eq:B} applies at $K_v-1$ and gives
\[
 B(K_v-1)=A(v-1)+A(K_v-v+1)=h_v(K_v).
\]
Equation~\eqref{eq:A} at $K_v$ now gives $\Phi_v(K_v)=v$.
If $K_v\le k_0-1$, monotonicity would imply $\Phi_v(K_v)\le v-1$;
if $K_v\ge k_0+1$, it would imply $\Phi_v(K_v)\ge v+1$.
Both are impossible, so $K_v=k_0$, and the preceding values follow.
\end{proof}

The equality at $K_v$ is the essential point: it locates the first
passage by testing $\Phi_v$ on either side of a proposed index. The
identities with the true sequence values are used only at that first
passage. Once it has been located, we know both $A(k_0)$ and
$B(k_0-1)$ and can compute the rest of the window in order.

\section{Exact values near the reference indices}\label{sec:profiles}
Put $s_j=T_j/2$. Dividing \eqref{eq:T} by two gives
\begin{equation}\label{eq:s}
 \begin{gathered}
 s_0=244,\quad s_1=352,\quad s_2=521,\\
 s_j=s_{j-1}+s_{j-3}+
 \begin{cases}6,&j\text{ even},\\-7,&j\text{ odd}\end{cases}
 \qquad(j\ge3).
 \end{gathered}
\end{equation}
For each $j$, the tables below record the values in a short window
around $s_j$, after subtracting $s_{j-3}$. Their width is chosen so that
every nested call needed at level $j$ can be read in the tables at
levels $j-1$ and $j-3$.

\begin{proposition}\label{prop:profiles}
For every $j\ge3$, put $z=s_{j-3}$. At an even level $j$, one has
\begin{equation}\label{eq:profile-even}
 \begin{array}{c|rrrrrr}
 t&-5&-4&-3&-2&-1&0\\\hline
 A(s_j+t)-z&1&1&2&2&2&2\\
 B(s_j+t)-z&1&2&3&3&3&4
 \end{array}
\end{equation}
and at an odd level $j$, one has
\begin{equation}\label{eq:profile-odd}
 \begin{array}{c|rrrrrrr}
 t&-1&0&1&2&3&4&5\\\hline
 A(s_j+t)-z&-4&-3&-3&-2&-2&-2&-1\\
 B(s_j+t)-z&-3&-2&-2&-2&-2&-2&-2
 \end{array}
\end{equation}
\end{proposition}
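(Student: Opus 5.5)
We argue by strong induction on $j$. The base levels $j=3,4,5$ (and whatever small $j$ the reading of nested calls requires, together with levels $0,1,2$ where the same pattern is checked with $z$ replaced by the appropriate seed values) are verified directly from the computed values of $C$ recorded in Appendix~\ref{app:seed}. For the inductive step at level $j\ge6$, assume both tables hold at all levels $j'<j$. We use two facts about the recursion \eqref{eq:s}. First, $s_j-s_{j-1}=s_{j-3}+O(1)$. Second, $s_{j-3}=s_{j-4}+s_{j-6}+O(1)$. The vertical offsets therefore combine as $z_j=z_{j-1}+z_{j-3}+c$ with an explicit constant $c\in\{6,-7\}$ depending on parity. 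We will use this relation to match the shifted values below.

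\textbf{Locating the window.} Let $v$ be the value $A$ takes at the left end of the new window: $v=z+1$ at $t=-5$ in the even case, and $v=z-3$ at $t=0$ in the odd case, the first place where $A$ steps up. We apply Lemma~\ref{lem:pin} with the proposed $k_0=s_j-5$, respectively $k_0=s_j$. We must evaluate $\Phi_v(k_0\pm1)$. This requires:
\begin{itemize}
\item $A(v-1)$, where $v-1\approx s_{j-3}$ lies in the level-$(j-3)$ window;
\item $A(k-v+1)$, where $k-v+1\approx s_j-s_{j-3}\approx s_{j-1}$ lies in the level-$(j-1)$ window;
\item $A(h_v(k))$ and $B(k-h_v(k))$, whose arguments are again near $s_{j-1}$ and $s_{j-3}$, or vice versa.
\end{itemize}
We check that every argument lands inside the $t$-ranges of the earlier tables; this is exactly why the tables have width $5$ to $7$. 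We then read off $\Phi_v(k_0-1)=v-1$ and $\Phi_v(k_0+1)=v+1$. Lemma~\ref{lem:pin} then gives $A(k_0)=v$, $A(k_0-1)=v-1$ and $B(k_0-1)=h_v(k_0)$.

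\textbf{Filling the window.} With $B(k_0-1)$ and $A(k_0)$ known, we march rightward through the window using \eqref{eq:B} and then \eqref{eq:A}. We compute $B(k)$ from $A(k)$ and $A(k+1)$, and $A(k+1)$ from $B(k)$. At $k+1$ we need $A(k+1)$ inside $B(k)$'s formula. To break this circularity, we note that $A(k+1)-A(k)\in\{0,1\}$ and that \eqref{eq:A} at $k+1$ uses only $B(k)$. So the order is: compute $A(k+1)$ from $B(k)$, then compute $B(k)$. This is circular, so instead we alternate as follows. Given $B(k-1)$, \eqref{eq:A} gives $A(k)$. Then \eqref{eq:B} at $k$ needs only $A(k)$ and values of $A$ at smaller indices, $A(A(k))$ and $A(k+1-A(k))$. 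These arguments are about $s_{j-3}$ and $s_{j-1}$ respectively, offset by $t$. Each nested argument has the form $s_{j-1}+t'$ or $s_{j-3}+t'$ plus the offset from $z$. After subtracting $z_{j-1}$ and $z_{j-3}$ and using $z_j=z_{j-1}+z_{j-3}+c$, the constant table entries add to give the claimed entries. Parity enters because levels $j-1$ and $j-3$ have parity opposite to $j$. An even level thus reads two odd tables, and conversely, which explains the two alternating profiles.

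\textbf{Main obstacle.} The bookkeeping in the first-passage step is the hard part. We must confirm that every argument of $\Phi_v(k_0\pm1)$ falls inside an already-established window, with the $\pm6,\pm7$ constants aligning exactly. The offsets $s_j-s_{j-1}-s_{j-3}$ must be tracked precisely at both parities, and an off-by-one slip would push an argument outside a table. I would first tabulate symbolically the offset of every argument, as an explicit integer plus $s_{j-1}$ or $s_{j-3}$. I would then confirm, separately for even and odd $j$, that each offset lies in the table ranges. The remaining entries of the window then follow mechanically.
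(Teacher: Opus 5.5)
Your plan is the paper's proof: strong induction from the computed levels $3,4,5$, Lemma~\ref{lem:pin} to anchor the left end of each new window, and then alternating \eqref{eq:A} and \eqref{eq:B}, with every nested argument read from the tables at levels $j-1$ and $j-3$. The levels $0,1,2$ you mention are never needed, and the offset bookkeeping you defer is exactly what the paper's gate tables and offset table carry out. Your one deviation is also valid: you anchor odd levels at $v=z-3$, $k_0=s_j$ instead of the paper's $v=z-4$, $k_0=s_j-1$. Indeed $\Phi_v(s_j-1)=A(z-4)+B(s_{j-1}-4)-1=z-4$ and $\Phi_v(s_j+1)=A(z-3)+B(s_{j-1}-3)-1=z-2$, reading only offsets in $[-4,-2]$ of the even tables, and the lemma then gives $A(s_j-1)=z-4$ and $B(s_j-1)=h_v(s_j)=z-3$ directly.
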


Here $t$ is the offset from $s_j$. The parity of the level $j$ selects
the table, whereas its two rows correspond to the odd and even
positions of $C$. We prove the proposition by strong induction on $j$.
Three initial levels are needed because the step uses levels $j-1$
and $j-3$.

\subsection{Starting the induction}
Direct evaluation of \eqref{eq:C} through index $3270$ gives the
following values. Each row is read over the offsets of the appropriate
table, in increasing order.
\begin{center}
\begin{tabular}{@{}rrrl@{}}
\toprule
$j$ & $s_j$ & $s_{j-3}$ & Values on the window\\
\midrule
3 & 758 & 244 & $A:\ 240,241,241,242,242,242,243$\\
  &     &     & $B:\ 241,242,242,242,242,242,242$\\[3pt]
4 & 1116 & 352 & $A:\ 353,353,354,354,354,354$\\
  &      &     & $B:\ 353,354,355,355,355,356$\\[3pt]
5 & 1630 & 521 & $A:\ 517,518,518,519,519,519,520$\\
  &      &     & $B:\ 518,519,519,519,519,519,519$\\
\bottomrule
\end{tabular}
\end{center}
Subtracting $s_{j-3}$ gives \eqref{eq:profile-even}--\eqref{eq:profile-odd}.
The largest parity index is $s_5+5=1635$, corresponding to $C(3270)$.
The program in Appendix~\ref{app:seed} verifies these forty values.

\subsection{Locating the left endpoint}
Assume the proposition holds at levels $3,\ldots,j-1$, where $j\ge6$,
and write $z=s_{j-3}$.

\emph{Even levels.}
The source levels $j-1,j-3$ are odd and use \eqref{eq:profile-odd}.
Equation~\eqref{eq:s} gives
\[
 s_j-z=s_{j-1}+6,\qquad z=s_{j-4}+s_{j-6}-7.
\]
We seek $A(s_j-5)=z+1$. Set $v=z+1$ and $k_0=s_j-5$ in
Lemma~\ref{lem:pin}. For $t=-6,-5,-4$,
\[
 h_v(s_j+t)=A(z)+A(s_{j-1}+t+6).
\]
The first term is read at level $j-3$ and the second at level $j-1$.
Using their known values, followed by the definition of $\Phi_v$, gives
\begin{equation}\label{eq:gate-even}
 \begin{array}{c|ccc}
 t&-6&-5&-4\\\hline
 h_v(s_j+t)&z+1&z+1&z+2\\
 \Phi_v(s_j+t)&z&z+1&z+2
 \end{array}
\end{equation}
For instance, the first column is
\[
 \begin{split}
 h_v(s_j-6)&=(s_{j-6}-3)+(s_{j-4}-3)=z+1,\\
 \Phi_v(s_j-6)&=A(z+1)+B(s_{j-1}-1)-1\\
             &=(s_{j-6}-3)+(s_{j-4}-3)-1=z.
 \end{split}
\]
The offsets used for $h_v$ are $(0,0),(0,1),(0,2)$; those for
$\Phi_v$ are $(1,-1),(1,0),(2,0)$. They all lie between $-1$ and $5$,
so the earlier odd-level tables supply every entry. Since
$k_0-1-v=s_{j-1}-1>0$, the first and last columns of
\eqref{eq:gate-even} locate the passage:
\begin{equation}\label{eq:left-even}
 \min\{k:A(k)=z+1\}=s_j-5,\qquad B(s_j-6)=z+1.
\end{equation}

For example, at $j=6$, $s_6=2394$ and $z=s_3=758$. The initial tables
locate the first occurrence of $759$ at $2389$:
\[
 \begin{split}
 h_{759}(2388)&=A(758)+A(1630)=241+518=759,\\
 \Phi_{759}(2388)&=A(759)+B(1629)-1=241+518-1=758,\\
 h_{759}(2390)&=A(758)+A(1632)=241+519=760,\\
 \Phi_{759}(2390)&=A(760)+B(1630)-1=242+519-1=760.
 \end{split}
\]

\emph{Odd levels.}
Now $j\ge7$ is odd, and the source levels are even. We have
\[
 s_j-z=s_{j-1}-7,\qquad z=s_{j-4}+s_{j-6}+6.
\]
To obtain $A(s_j-1)=z-4$, take $v=z-4$ and $k_0=s_j-1$.
For $t=-2,-1,0$,
\[
 h_v(s_j+t)=A(z-5)+A(s_{j-1}+t-2),
\]
and the earlier even-level tables give
\begin{equation}\label{eq:gate-odd}
 \begin{array}{c|ccc}
 t&-2&-1&0\\\hline
 h_v(s_j+t)&z-4&z-3&z-3\\
 \Phi_v(s_j+t)&z-5&z-4&z-3
 \end{array}
\end{equation}
The offsets used for $h_v$ are $(-5,-4)$, $(-5,-3)$, $(-5,-2)$; those for
$\Phi_v$ are $(-4,-5)$, $(-3,-5)$, $(-3,-4)$, all between $-5$ and $0$.
For example,
\[
 \Phi_v(s_j)=A(z-3)+B(s_{j-1}-4)-1
            =(s_{j-6}+2)+(s_{j-4}+2)-1=z-3.
\]
Here $v\ge3$ and $k_0-1-v=s_{j-1}-5>0$. Lemma~\ref{lem:pin} therefore gives
\begin{equation}\label{eq:left-odd}
 \min\{k:A(k)=z-4\}=s_j-1,\qquad B(s_j-2)=z-3.
\end{equation}

\subsection{Completing the tables}
In either case we now know $A(k_0)$ and $B(k_0-1)$ at the left endpoint.
Apply \eqref{eq:B} and \eqref{eq:A} alternately to obtain
\[
 B(k_0),\quad A(k_0+1),\quad B(k_0+1),\quad A(k_0+2),\quad\ldots.
\]
The values already obtained in the new window determine the arguments
of the next calls. The values at those arguments come from the known
tables at levels $j-3$ and $j-1$.

For clarity, write $k=s_j+t$. If $B(k-1)=z+b$, the two arguments
used to compute $A(k)$ have offsets
\[
 (b,\,6+t-b)\quad\hbox{at an even level},\qquad
 (b,\,t-7-b)\quad\hbox{at an odd level},
\]
relative to $s_{j-3}$ and $s_{j-1}$, respectively. The functions read
there are $A$ and $B$. Once $A(k)=z+a$ is known, the two arguments
used to compute $B(k)$ have offsets
\[
 (a,\,7+t-a)\quad\hbox{at an even level},\qquad
 (a,\,t-6-a)\quad\hbox{at an odd level},
\]
and both calls read $A$. The two source value centers add to $z+7$
at an even level and to $z-6$ at an odd level. Including the $-1$ in
\eqref{eq:A}, the resulting offsets from $z$ are therefore the sums
of the two table entries plus $6,7$ in the even case and minus $7,6$
in the odd case, for $A,B$, respectively.

The following table lists all source offsets. Each pair gives the
offset at level $j-3$ first, then that at level $j-1$; the first
$A$-value was fixed by the first-passage argument.
\begin{center}
\begin{tabular}{@{}rcc@{\qquad\qquad}rcc@{}}
\toprule
\multicolumn{3}{c}{Even level $j$} & \multicolumn{3}{c}{Odd level $j$}\\
$t$ & for $A$ & for $B$ & $t$ & for $A$ & for $B$\\
\midrule
$-5$ & fixed & $(1,1)$ & $-1$ & fixed & $(-4,-3)$\\
$-4$ & $(1,1)$ & $(1,2)$ & $0$ & $(-3,-4)$ & $(-3,-3)$\\
$-3$ & $(2,1)$ & $(2,2)$ & $1$ & $(-2,-4)$ & $(-3,-2)$\\
$-2$ & $(3,1)$ & $(2,3)$ & $2$ & $(-2,-3)$ & $(-2,-2)$\\
$-1$ & $(3,2)$ & $(2,4)$ & $3$ & $(-2,-2)$ & $(-2,-1)$\\
$0$  & $(3,3)$ & $(2,5)$ & $4$ & $(-2,-1)$ & $(-2,0)$\\
     &         &         & $5$ & $(-2,0)$ & $(-1,0)$\\
\bottomrule
\end{tabular}
\end{center}
For example, at the start of an even-level table,
\[
 B(s_j-5)=A(z+1)+A(s_{j-1}+1)
         =(s_{j-6}-3)+(s_{j-4}-3)=z+1.
\]
At the start of an odd-level table,
\[
 B(s_j-1)=A(z-4)+A(s_{j-1}-3)
         =(s_{j-6}+1)+(s_{j-4}+2)=z-3.
\]
All offsets in the even case lie in the earlier odd window
$[-1,5]$, and all offsets in the odd case lie in the earlier even
window $[-5,0]$. Reading these entries in order gives precisely
\eqref{eq:profile-even}--\eqref{eq:profile-odd}, completing the induction
and proving Proposition~\ref{prop:profiles}.

At $t=0$, the proposition gives
\begin{equation}\label{eq:centers}
 \begin{array}{c|cc}
 &A(s_j)-s_{j-3}&B(s_j)-s_{j-3}\\\hline
 j\text{ even}&2&4\\
 j\text{ odd}&-3&-2
 \end{array}
\end{equation}
Since $T_j=2s_j$, these are the exact identities
\begin{align}
 C(T_j)&=T_{j-3}+\begin{cases}7,&j\text{ even},\\-5,&j\text{ odd},\end{cases}\label{eq:even}\\
 C(T_j-1)&=T_{j-3}+\begin{cases}3,&j\text{ even},\\-7,&j\text{ odd},\end{cases}
 \qquad(j\ge3).\label{eq:odd}
\end{align}

\section{The global estimate}\label{sec:rate}
The reference indices are geometrically spaced. To control the values
between them, we expand the two recurrences through many levels and
use a quadratic bound on the resulting weighted averages. The exact
values from Section~\ref{sec:profiles} ensure that every argument in
this expansion remains at the expected scale.

\subsection{Geometric scales and bounded errors}\label{sec:scale}
Observe from \eqref{eq:T} that
$T_j-1-\frac{13}{3}(-1)^j$ satisfies the homogeneous Narayana recurrence
$u_j=u_{j-1}+u_{j-3}$. Its initial values are positive. The characteristic
polynomial $x^3-x^2-1$ has one real root $\psi>1$; the other two roots
are conjugate and have modulus $\psi^{-1/2}<1$. The coefficient of
$\psi^j$ in the solution is positive, since positivity excludes a
negative coefficient and the positive recurrence keeps the terms
bounded away from zero. It follows that, for some $\kappa>0$,
\begin{equation}\label{eq:growth}
 s_j=\kappa\psi^j+O(1).
\end{equation}
The sequence $(s_j)$ is increasing: the initial values are increasing,
and \eqref{eq:s} gives $s_j-s_{j-1}\ge s_{j-3}-7>0$.
In particular, \eqref{eq:even}--\eqref{eq:odd} give a bounded error from
the line $\psi^{-3}n$ on both parities at the selected indices.

By \eqref{eq:centers} and \eqref{eq:s}, for $F=A$ or $B$ and $j\ge3$,
\begin{equation}\label{eq:reference-bounds}
 |F(s_j)-s_{j-3}|\le4,\qquad
 |s_j-F(s_j)-s_{j-1}|\le5.
\end{equation}
Thus either map moves three reference levels down, while either
complementary map moves one level down, with uniformly bounded errors.
These shifts explain the weights $\psi^{-3}$ and $\psi^{-1}$, whose sum is $1$.
The $1$-Lipschitz property ensures that an error already present is
not enlarged by further composition.

We also remove the shifts in \eqref{eq:A}--\eqref{eq:B}, at a bounded cost:
\begin{align}
 -2&\le A(k)-A(B(k))-B(k-B(k))\le0,\label{eq:resA}\\
 0&\le B(k)-A(A(k))-A(k-A(k))\le1\qquad(k\ge0).\label{eq:resB}
\end{align}
For \eqref{eq:resA}, replacing $B(k-1)$ by $B(k)$ leaves error $-1$
if the two values agree. Otherwise they differ by one, and the error is
\[
 -1-\bigl(A(B(k))-A(B(k)-1)\bigr)
    +\bigl(B(k-B(k)+1)-B(k-B(k))\bigr),
\]
which lies in $[-2,0]$. For \eqref{eq:resB}, removing the $+1$ in
\eqref{eq:B} changes one $A$-value by zero or one. The cases $k=0,1$
are checked directly.

\subsection{The recursion tree}\label{sec:tree}
Fix an integer $N\ge1$. A node is labeled by a binary word $w$
recording the successive branch choices. The empty word $\varepsilon$
labels the root. At node $w$, we record an argument $x_w$ and the
function $F_w\in\{A,B\}$ evaluated there. Choose $F_\varepsilon=A$ or
$B$ and put $x_\varepsilon=N$. Equations~\eqref{eq:resA}--\eqref{eq:resB}
suggest the following expansion:
\begin{equation}\label{eq:tree}
 \begin{array}{c|cc}
 F_w&(x_{w0},F_{w0})&(x_{w1},F_{w1})\\\hline
 A&(B(x_w),A)&(x_w-B(x_w),B)\\
 B&(A(x_w),A)&(x_w-A(x_w),A)
 \end{array}
\end{equation}
In each row the two child arguments add to $x_w$, while their evaluated
values add to $F_w(x_w)$ with an error of absolute size at most $2$.
Branch $0$ always leads to $A$, whereas branch $1$ switches $A$ and $B$.
All arguments lie in $\{0,\ldots,N\}$.

Write $|w|_0,|w|_1$ for the numbers of zeros and ones in $w$, and give
the path the weight
\[
 \rho_w=\psi^{-3|w|_0-|w|_1},\qquad \rho_\varepsilon=1.
\]
For the linear model $A(x)=B(x)=\psi^{-3}x$, the argument at $w$ would be
$N\rho_w$. We therefore normalize by this scale:
\begin{equation}\label{eq:XY}
 X_w=\frac{x_w}{N\rho_w},\qquad
 Y_w=\frac{F_w(x_w)}{N\rho_w}.
\end{equation}
At the root, $X_\varepsilon=1$ and $Y_\varepsilon$ is $A(N)/N$ or
$B(N)/N$. Since $x_{w0}+x_{w1}=x_w$, we have the exact relation
\begin{equation}\label{eq:Xmean}
 X_w=\psi^{-3}X_{w0}+\psi^{-1}X_{w1}.
\end{equation}
The corresponding relation for $Y$ has a small error inherited from
\eqref{eq:resA}--\eqref{eq:resB}. The next lemma controls the root error
from a common bound on $X$ and a bound on that one-step error.

\subsection{A finite averaging estimate}\label{sec:averaging}
The decisive feature of \eqref{eq:tree} is that reversing two branch
choices preserves their weight but changes the function reached:
$10$ ends at $A$ and $01$ at $B$. At the latter node, $A$ supplies the
next splitting argument. This lets us compare an $A$-value with a
nearby argument at the same scale. The proof turns that comparison
into a bound by summing squared increments over the tree.

\begin{lemma}[Tree averaging estimate]\label{lem:averaging}
Construct \eqref{eq:tree} to depth $H\ge3$ and use \eqref{eq:XY}.
Suppose that
\[
 0\le X_w\le K\quad(|w|\le H),\qquad
 |Y_w-\psi^{-3}Y_{w0}-\psi^{-1}Y_{w1}|\le\delta\quad(|w|<H)
\]
for constants $K$ and $\delta\ge0$. Then, for either initial choice of
$A$ or $B$ at the root,
\begin{equation}\label{eq:averaging}
 |Y_\varepsilon-\psi^{-3}|
 \le K\sqrt{\frac{51\psi^4+15\psi}{H-2}}+3H\delta.
\end{equation}
\end{lemma}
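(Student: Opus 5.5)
The plan is to read the weights probabilistically. Since $\psi^{-3}+\psi^{-1}=1$, choosing branch $0$ with probability $p:=\psi^{-3}$ and branch $1$ with probability $q:=\psi^{-1}$ gives a random path $w_0=\varepsilon,w_1,\dots,w_H$ down the tree \eqref{eq:tree}, and node $w$ is reached with probability $p^{|w|_0}q^{|w|_1}$. By \eqref{eq:Xmean}, $X_{w_h}$ is then an exact martingale with values in $[0,K]$. Orthogonality of martingale increments gives
\[
 \sum_{h<H}\E\bigl[(X_{w_{h+1}}-X_{w_h})^2\bigr]=\E[X_{w_H}^2]-X_\varepsilon^2\le K^2 .
\]
Likewise $Y_{w_h}$ is a martingale up to the one-step defect $\delta$, so $Y_\varepsilon=\E[Y_{w_h}]+O(h\delta)$ for every $h\le H$. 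This accounts for the term $3H\delta$, allowing a few extra levels of expansion beyond $h$. It then suffices to show that $\E[Y_{w_h}]$ is within $O(K\cdot(\text{local increments near level }h))$ of $p=\psi^{-3}$. Since $X_\varepsilon=1$, it is enough to compare $\E[Y_{w_h}]$ with $p\,\E[X_{w_{h+1}}]=p$.

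The link between $Y$ and $X$ comes from the table \eqref{eq:tree}. If $F_u=B$, then $x_{u0}=A(x_u)=F_u(x_u)$, so exactly $Y_u=\rho_{u0}X_{u0}/\rho_u=pX_{u0}$. If $F_u=A$, the value $A(x_u)$ is not an argument of a child, and here we use the $10$/$01$ swap. For any $v$, the node $v01$ carries $B$ and the node $v10$ carries $A$, with $\rho_{v01}=\rho_{v10}$. Therefore
\[
 Y_{v10}-pX_{v010}=\frac{A(x_{v10})-A(x_{v01})}{N\rho_{v10}},
\]
and the $1$-Lipschitz property of $A$ bounds this by $|X_{v10}-X_{v01}|$. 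That difference is at most $|X_{v10}-X_{v1}|+|X_{v1}-X_v|+|X_v-X_{v0}|+|X_{v0}-X_{v01}|$, a sum of martingale increments along the two paths through $v$.

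So my order of steps is as follows. First, fix a level $h\le H-2$. Second, expand each $Y$ at level $h$ two further levels using the approximate martingale relation, at cost $O(\delta)$, so that the main term is an average over grandchildren. Third, classify the level-$(h+2)$ nodes by their last two letters and by the function they carry. Nodes carrying $B$ are converted exactly to $pX_{\cdot0}$. Nodes of the form $v10$ carrying $A$ are converted to $pX_{v010}$ with the increment error above. The remaining $A$-nodes, ending in $00$ or $11$ with $F=A$, need one more expansion step, and I would expand them once more until they end in $01$ or $10$. Every geometric tail is summable with constants that are powers of $\psi$, and this is where something like $51\psi^4+15\psi$ would emerge. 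Fourth, after these replacements, $\E[Y_{w_h}]-p$ is bounded by a fixed linear combination of $\E|X_{w_{i+1}}-X_{w_i}|$ for $i$ in a window $h,\dots,h+3$, up to weight-ratio factors, plus $O(\delta)$. Finally, Cauchy--Schwarz bounds this by $\sqrt{c\sum_{i}\E[(\Delta X_i)^2]}$, and averaging the identity over $h=0,\dots,H-3$ turns the total budget $K^2$ into $K^2/(H-2)$, giving \eqref{eq:averaging}.

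The main obstacle is the third step: making the conversion of $A$-nodes into the $10$ pattern uniform. The swapped partner $v01$ must exist at the same depth and be reached with the same probability, which holds since $\rho_{v01}=\rho_{v10}$. The partner's increments must also be counted with bounded multiplicity, and $A$-nodes not of the form $v10$ must be pushed down with controlled depth. I would first try to restrict to the event that the path at levels $h+1,h+2$ reads $10$ or $01$. Those events have equal probability $pq$, and pairing them path-by-path keeps the Cauchy--Schwarz constant explicit. The complementary $00$/$11$ events would then be handled by the shift $h\mapsto h+1$ and absorbed into the average over $h$.
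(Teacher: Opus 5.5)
Your framework (the martingale $X$, its energy bound, and the $10$/$01$ swap) matches the paper's, but there is a genuine gap in your third and fourth steps.

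First, $B$-nodes do not convert exactly. In \eqref{eq:tree} every node splits through the \emph{other} function, so for $F_u=B$ one has $x_{u0}=A(x_u)$, whereas $Y_u=B(x_u)/(N\rho_u)$. A $B$-node needs a swap just as an $A$-node does; for $u=v01$ you can compare $B(x_{v01})$ with $B(x_{v10})=x_{v100}$. After this correction, the swap controls $D_u=Y_u-\psi^{-3}X_u$ only at nodes whose last two letters are $10$ or $01$. Nodes ending in $00$, and nodes ending in $11$ whichever function they carry, remain uncontrolled. The uncontrolled mass does not become small in a fixed window: the event $a_{h+1}=a_{h+2}=a_{h+3}$ already has probability $\psi^{-9}+\psi^{-3}$. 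Hence your claim that $\E[Y_{w_h}]-\psi^{-3}$ is bounded by increments at levels $h,\dots,h+3$ fails. Expanding these nodes until a $10$ or $01$ appears defines a stopping time of unbounded depth. When you then average over $h$, the increment at a given level is counted once for every $h$ in the preceding run of equal letters. So applying Cauchy--Schwarz for each $h$ and then averaging does not give a bound of order $K^2/(H-2)$, let alone the constant $51\psi^4+15\psi$.

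The missing idea is to spend the energy of the approximate martingale $D$ itself, not only that of $X$. The paper replaces $D$ by the exact backward average $V$ of its values at depth $H$. Then $|V_w-D_w|\le(H-|w|)\delta$, and $m_\ell=\E_\ell V_w^2$ is nondecreasing with $m_H\le K^2$. Keeping only the $u=10$ term in the two-step variance identity gives $\E_\ell(V_{w10}-V_w)^2\le\psi^4(m_{\ell+2}-m_\ell)$. This compares \emph{every} node $w$ with its own grandchild $w10$, so no node has to be classified or pushed down. Combine this with $V_\varepsilon^2\le m_\ell$ and the swap bound for $D_{w10}$ to get $m_\ell\le3\E_\ell D_{w10}^2+3H^2\delta^2+3\psi^4(m_{\ell+2}-m_\ell)$; summing over $\ell=0,\dots,H-3$ yields \eqref{eq:averaging}. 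Your stopping-time route could be salvaged by summing over $h$ before applying Cauchy--Schwarz, using that run lengths have bounded second moment. That recovers a bound of the form $CK/\sqrt{H}+O(H\delta)$, but not the stated constant.
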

\begin{proof}
Let $D_w=Y_w-\psi^{-3}X_w$. At the root this is the required error
$Y_\varepsilon-\psi^{-3}$, and \eqref{eq:Xmean} shows that $D$ has the same
averaging error as $Y$. Because $D_w$ satisfies the weighted-mean
relation only up to a residual bounded by $\delta$, its squared
increments do not telescope cleanly across levels. To restore exact
telescoping, we replace $D_w$ by the exact backward average $V_w$ of
its terminal values at depth $H$: set $V_w=D_w$ there and define
$V_w=\psi^{-3}V_{w0}+\psi^{-1}V_{w1}$ upwards.
Since $0\le Y_w\le X_w\le K$, both $|D_w|$ and $|V_w|$ are at most
$K$, and backward induction gives
\begin{equation}\label{eq:compensate}
 |V_w-D_w|\le(H-|w|)\delta.
\end{equation}

For any quantity $Z_w$, write
$\E_\ell Z_w=\sum_{|w|=\ell}\rho_wZ_w$.
Define the mean square increment of $X$ and the mean square of $V$ by
\[
 e_\ell=\E_\ell\!\left[\psi^{-3}(X_{w0}-X_w)^2+\psi^{-1}(X_{w1}-X_w)^2\right],
 \qquad m_\ell=\E_\ell V_w^2.
\]
The exact average relation gives
\[
 \psi^{-3}X_{w0}^2+\psi^{-1}X_{w1}^2-X_w^2
 =\psi^{-3}(X_{w0}-X_w)^2+\psi^{-1}(X_{w1}-X_w)^2.
\]
Averaging and summing over the levels makes the left side telescope.
Applying the same identity to $V$, we obtain
\begin{equation}\label{eq:energy}
 \sum_{\ell=0}^{H-1}e_\ell=\E_H X_w^2-X_\varepsilon^2\le K^2,
 \qquad m_{\ell+1}\ge m_\ell,\qquad m_H\le K^2.
\end{equation}
Equivalently, $X$ and $V$ are bounded martingales along a path with
branch probabilities $\psi^{-3},\psi^{-1}$. In particular, the total mean square
change is bounded independently of $H$.

We now relate the errors $D$ to these increments. The two paths $10$
and $01$ satisfy
\[
 F_{w10}=A,\qquad F_{w01}=B,\qquad
 \rho_{w10}=\rho_{w01}=\psi^{-4}\rho_w.
\]
The weights are unchanged when the order is reversed, whereas the
function labels are different. At $w01$ the next branch $0$ uses $A$
as its splitting map, so $x_{w010}=A(x_{w01})$ and
\[
 \frac{A(x_{w01})}{N\rho_w\psi^{-4}}-\psi^{-3}X_{w01}
 =\psi^{-3}(X_{w010}-X_{w01}).
\]
The function $x\mapsto A(x)-\psi^{-3}x$ is $1$-Lipschitz, since its increment
between $u\le v$ lies between $-\psi^{-3}(v-u)$ and $\psi^{-1}(v-u)$. Comparing it
at $x_{w10}$ and $x_{w01}$, with the same denominator $N\rho_w\psi^{-4}$,
gives
\begin{equation}\label{eq:compare-paths}
 |D_{w10}|\le |X_{w10}-X_{w01}|+\psi^{-3}|X_{w010}-X_{w01}|.
\end{equation}
Thus an error in an evaluated value is bounded by changes of the
normalized arguments.

Insert $w1,w,w0$ between $w10$ and $w01$. The right side of
\eqref{eq:compare-paths} is at most
\[
 \begin{split}
 &|X_{w1}-X_w|+|X_{w10}-X_{w1}|\\
 &\qquad+|X_{w0}-X_w|+|X_{w01}-X_{w0}|
       +\psi^{-3}|X_{w010}-X_{w01}|.
 \end{split}
\]
Its square is at most five times the sum of the five squares.
For $0\le\ell\le H-3$, the weighted means of these squares are bounded,
in the displayed order, by
\[
 \psi e_\ell,\quad \psi^4 e_{\ell+1},\quad
 \psi^3 e_\ell,\quad \psi^4 e_{\ell+1},\quad
 \psi e_{\ell+2}.
\]
For example, choosing the suffix $10$ has weight $\psi^{-4}$, which explains
the factor $\psi^4$ in the second bound. In the last term, the coefficient $\psi^{-6}$
cancels the two prescribed zero-branch weights. Hence
\begin{equation}\label{eq:local-energy}
 \E_\ell D_{w10}^2\le
 5\psi^4\bigl(e_\ell+2e_{\ell+1}+\psi^{-3}e_{\ell+2}\bigr).
\end{equation}

It remains to pass from the selected grandchild $w10$ to its parent.
The two-step variance identity for $V$ is
\[
 m_{\ell+2}-m_\ell
 =\E_\ell\sum_{|u|=2}\rho_u(V_{wu}-V_w)^2.
\]
Keeping the nonnegative term with $u=10$ gives
\[
 \E_\ell(V_{w10}-V_w)^2\le\psi^4(m_{\ell+2}-m_\ell).
\]
Decompose
\[
 V_w=D_{w10}+(V_{w10}-D_{w10})+(V_w-V_{w10}).
\]
By \eqref{eq:compensate}, the middle term has absolute size at most
$H\delta$. The inequality $(a+b+c)^2\le3(a^2+b^2+c^2)$ therefore yields
\[
 m_\ell\le3\E_\ell D_{w10}^2+3H^2\delta^2
          +3\psi^4(m_{\ell+2}-m_\ell).
\]
Sum over $\ell=0,\ldots,H-3$. Each $m_\ell$ is at least
$m_0=V_\varepsilon^2$. From \eqref{eq:energy} and \eqref{eq:local-energy},
\[
 \sum_{\ell=0}^{H-3}\E_\ell D_{w10}^2
 \le5\psi^4(3+\psi^{-3})K^2,\qquad
 \sum_{\ell=0}^{H-3}(m_{\ell+2}-m_\ell)\le2K^2.
\]
For the second inequality, the sum telescopes to
$m_{H-2}+m_{H-1}-m_0-m_1$. Consequently,
\[
 (H-2)V_\varepsilon^2
 \le(51\psi^4+15\psi)K^2+3(H-2)H^2\delta^2.
\]
Taking square roots and using \eqref{eq:compensate} once more at the
root gives \eqref{eq:averaging}, since $1+\sqrt3<3$.
\end{proof}

The factor $H^{-1/2}$ comes from summing a square estimate over $H-2$
levels: a fixed error at the root contributes at every level, while
the total squared change stays bounded. We now use the reference
values to allow a logarithmic choice of $H$.

\subsection{A logarithmic depth}\label{sec:depth}
Fix $J$ by $s_J\le N<s_{J+1}$ and take $H=\lfloor J/6\rfloor$, with
$J$ sufficiently large. For a path $w$, put
\[
 d(w)=3|w|_0+|w|_1,\qquad \rho_w=\psi^{-d(w)}.
\]
This counts the reference levels lost along the path. Since
$d(w)\le3H\le J/2$, all reference levels used below are at least $3$.
Monotonicity, the Lipschitz bounds, and \eqref{eq:reference-bounds} give
by induction along a path
\begin{equation}\label{eq:path-bound}
 x_w\le s_{J+1-d(w)}+5|w|\qquad(|w|\le H).
\end{equation}
Indeed, if the current argument is at most $s_i+e$, either branch-$0$
map is at most $s_{i-3}+e+5$, and either complementary map is at most
$s_{i-1}+e+5$. Each step adds at most $5$ to the previous error.

Dividing \eqref{eq:path-bound} by $N\rho_w$ and using
\eqref{eq:growth}, uniformly in $w$,
\[
 0\le X_w\le
 \frac{\kappa\psi^{J+1}+O((H+1)\psi^{3H})}{N}\le K
\]
for a fixed constant $K$. Here $N\asymp\psi^J$, whereas $3H\le J/2$.
Thus the first hypothesis of Lemma~\ref{lem:averaging} holds.
For the second, \eqref{eq:resA}--\eqref{eq:resB} give
\[
 |Y_w-\psi^{-3}Y_{w0}-\psi^{-1}Y_{w1}|\le\frac2{N\rho_w}
                         \le\frac{2\psi^{3H}}N.
\]
We may therefore take $\delta=2\psi^{3H}/N$. For either root function
$F=A$ or $B$, the lemma gives
\[
 \left|\frac{F(N)}N-\psi^{-3}\right|
 \le K\sqrt{\frac{51\psi^4+15\psi}{H-2}}+\frac{6H\psi^{3H}}N
 =O(J^{-1/2}).
\]
The second term is $O(J\psi^{-J/2})$, and
$\log N=J\log\psi+O(1)$. Hence
\[
 A(N)=\psi^{-3}N+O\!\left(\frac{N}{\sqrt{\log N}}\right),\qquad
 B(N)=\psi^{-3}N+O\!\left(\frac{N}{\sqrt{\log N}}\right).
\]
Returning to $C(2N)=2B(N)-1$ and $C(2N-1)=2A(N)-1$ proves
Theorem~\ref{thm:main}.

\section{Further questions}\label{sec:questions}
Is the order $n/\sqrt{\log n}$ in Theorem~\ref{thm:main} optimal?
What is the corresponding normalized limsup? On the upper side of
the limiting line, we conjecture a bounded error.
\begin{conjecture}\label{conj:upper}
For every $n\ge111$,
\[
 C(n)<\frac{n}{\psi^3}+2.
\]
\end{conjecture}
A combinatorial description of the intervals between the reference
points could explain the shapes of the arches and help determine the
optimal order of the error.

\section*{Acknowledgments}
The author thanks Jeffrey Shallit for discussions and automata computations.
The repeating patterns in an automaton he supplied suggested the search
for the local tables used in the proof.

\appendix
\Needspace{30\baselineskip}
\section{The finite initial calculation}\label{app:seed}
The following Python 3 program verifies the three initial tables in
Proposition~\ref{prop:profiles} in integer arithmetic. The lists
\texttt{aa} and \texttt{bb} contain the two rows for even and odd levels,
and \texttt{lo} their first offsets.
\begin{verbatim}
C = [0, 1, 1]
for n in range(3, 3271):
    u = C[n-1]
    assert 1 <= u < n and 1 <= n-u < n
    C.append(C[u] + C[n-u] + (1 if n % 2 == 0 else -1))
    assert C[n] > 0 and C[n] % 2 == 1
    assert C[n] - C[n-2] in (0, 2)

s = [244, 352, 521, 758, 1116, 1630]
lo = [-5, -1]
aa = [[1,1,2,2,2,2], [-4,-3,-3,-2,-2,-2,-1]]
bb = [[1,2,3,3,3,4], [-3,-2,-2,-2,-2,-2,-2]]
for j in (3, 4, 5):
    e = j % 2
    for i in range(len(aa[e])):
        k = s[j] + lo[e] + i
        assert (C[2*k-1]+1)//2 == s[j-3] + aa[e][i]
        assert (C[2*k]+1)//2 == s[j-3] + bb[e][i]
print("PASS: all three initial tables")
\end{verbatim}

\end{document}